\newcommand{\R}{\mathbb R}
\newcommand{\p}{\partial}
\newcommand{\sgn}{{\rm sgn}}
\newcommand{\hil}{\mathcal{H}}
\renewcommand{\a}{\mathfrak{a}}
\newtheorem{theorem}{Theorem}[section]
\newtheorem{lemma}[theorem]{Lemma}
\newtheorem{proposition}[theorem]{Proposition}
\theoremstyle{remark}
\newtheorem{remark}{Remark}[section]
\theoremstyle{definition}
\numberwithin{equation}{section}
\begin{document}

\title[Asymptotic behavior]{Asymptotic behavior of solutions of the dispersive generalized Benjamin-Ono equation}
\author{F. Linares}
\address[F. Linares] {IMPA\\ Estrada Dona Castorina 110, Rio de Janeiro 22460-320, RJ Brazil}
\email{linares@impa.br}
\author{A. Mendez}
\address[A. Mendez] {IMPA\\ Estrada Dona Castorina 110, Rio de Janeiro 22460-320, RJ Brazil}
\email{amendez@impa.br }
\author{G. Ponce}
\address[G. Ponce]{Department  of Mathematics\\
University of California\\
Santa Barbara, CA 93106\\
USA}
\email{ponce@math.ucsb.edu}

\keywords{Asymptotic behavior, Benjamin-Ono equation, Breathers}

\begin{abstract}
We show that for any uniformly bounded in time $H^1\cap L^1$ solution of the dispersive generalized Benjamin-Ono equation,
the limit infimum, as time $t$ goes to infinity,  converges to zero locally in an increasing-in-time region of space of order $t/\log t$.
This result is in accordance with the one established by Mu\~noz and Ponce \cite{MP1} for solutions of the Benjamin-Ono equation.
Similar to solutions of the Benjamin-Ono equation, for a solution of the dispersive generalized Benjamin-Ono equation, with a mild $L^1$-norm 
growth in time, its limit infimum must converge to zero, as time goes to infinity, locally in an increasing on time region of space 
of order depending on the rate of growth of its $L^1$-norm. As a consequence, the existence of  breathers or any other solution for the dispersive
 generalized Benjamin-Ono equation moving with a speed \lq\lq slower" than a soliton is discarded.
In our analysis  the use of commutators expansions is essential.

\end{abstract}

\maketitle




\section{Introduction}

This work is concerned with solutions of the initial value problem (IVP) for the dispersion generalized Benjamin-Ono (DGBO) equation,
\begin{equation}\label{dgbo}
\begin{cases}
\partial_{t}u-D_{x}^{\alpha+1}\partial_{x}u+u\partial_{x}u=0,\hskip15pt x,t\in\mathbb{R}, \;0<\alpha<1, \\
u(x,0)=u_{0}(x),  \\
\end{cases} 
\end{equation}
where $D^s_x$ denotes  the homogeneous derivative of order $s\in\R$:
$$
D^s_x=(-\Delta)^{s/2}\hskip15pt\text{with}\hskip10pt D^s_x=(\mathcal H\,\partial_x)^s,\;\;\;\text{if}\;\;\;n=1
\;\;\text{and}\;\; D^s_x f=c_s\big(|\xi|^s\widehat{f}\,\big)^{\vee},
$$
where $\hil$ denotes the Hilbert transform,
\begin{equation*}
\begin{split}
\hil f(x)&=\frac{1}{\pi} {\rm v.p.}\big(\frac{1}{x}\ast f\big)(x)\\
&=\frac{1}{\pi}\lim_{\epsilon\downarrow 0}\int\limits_{|y|\le \epsilon} \frac{f(x-y)}{y}\,dy=-i\,\sgn(\xi) \widehat{f}(\xi))^{\vee}(x).
\end{split}
\end{equation*}
These equations model vorticity waves in the coastal zone, see \cite{mst} and references therein.

For $\alpha=0$ and $\alpha=1$ the equations in \eqref{dgbo} correspond to the well known Benjamin-Ono (BO) and Korteweg-de Vries (KdV) equations,
respectively.

Even though the equations in \eqref{dgbo} are not completely integrable their solutions satisfy the following conserved quantities,
\begin{equation}\label{laws}
\begin{split}
& I(u)(t)=\int\limits_{\R} u (x,t)\,dx, \hskip20pt M(u)(t)=\int\limits_{\R} u^2(x,t)\,dx,\\
& E(u)(t)=\frac12\int\limits_{\R} |D^{\frac{1+\alpha}{2}}_xu(x,t)|^2\,dx -\frac16\int\limits_{\R} u^3(x,t)\,dx.
\end{split}
\end{equation}

Regarding local well-posedness theory for the IVP \eqref{dgbo}, there is an extensive literature addressing this issue in Sobolev
spaces $H^s(\R)= (1-\p_x^2)^sL^2(\R)$. 
See for instance \cite{CoKeSt, guo, Hr, HIKK, KePoVe, mr} and references therein. We shall recall that  proving the well-posedness for the IVP 
\eqref{dgbo} by direct contraction principle, the principal obstruction is the loss of derivative from the nonlinearity. It was proved by Molinet, 
Saut and Tzvetkov \cite{mst} that if $0\le \alpha <1$ then $H^s$ assumption alone on the initial data is insufficient for a proof
of local well-posedness of \eqref{dgbo} via Picard iteration by showing the solution mapping fails to be
$C^2$ smooth from $H^s$ to $C([0,T];H^s)$ at the origin for any $s$.  The methods of proof of the results above are based in compactness techniques. Concerning global well-posedness (GWP) for the IVP \eqref{dgbo} for initial data in $H^s(\R)$,  Molinet and Ribaud \cite{mr} established  a global result for $s>\frac{1+\alpha}{2}$, $\alpha\in(0,1)$, for initial data satisfying a constrain on the lower frequencies. Herr in \cite{Hr} proved GPW in $H^s(\R)$, $s\ge 0$ requiring that the initial data have an extra property in low frequencies to apply the contraction principle successfully. In \cite{guo}  using the argument introduced in \cite{IKT}  Guo showed GWP for $s>\frac{1+\alpha}{2}$, $\alpha\in (1/3,1)$, without any restriction on the initial data. Finally, Herr, Ionescu, Kenig and Koch, in \cite{HIKK}, establish GWP for initial data in $H^s(\R)$, $s\ge 0$ by using a paradifferential gauge.


\vspace{3mm}

Traveling wave solutions of \eqref{dgbo}  are solutions of the form
$$
u(x,t)=c^{1+\alpha}\,\varphi_{\alpha}(c(x-c^{1+\alpha}t)),\hskip15pt c>0,
$$
$\varphi_{\alpha}$ is even, positive and decreasing fo $x>0$. In the case of the KdV, $\alpha=1$, one has that
$$
\varphi_{1}(x) =\frac{3}{2}\,{\rm sech}^2\Big(\frac{x}{2}\Big).
$$
In the case of the BO equation, $\alpha=0$, one has that
$$
\varphi_0(x)=\frac{4}{1+x^2}.
$$
For $\,\alpha\in(0,1)$  the existence of the ground state was established in \cite{MIW} by variational arguments.
More recently, their uniqueness was established in \cite{Lezman}, although no explicit formula is known. In \cite{KeMaRo}   the following upper bound for the decay of the ground state was deduced
$$
\varphi(x)\leq \frac{c_{\alpha}}{(1+x^2)^{1+\alpha/2}},\hskip15pt \alpha\in(0,1).
$$

\vspace{3mm}

Our aim in this work is the study of the asymptotic behavior  of solutions of the IVP \eqref{dgbo}. Recently, Mu\~noz and Ponce \cite{MP1}
examined this issue for solutions of the IVP associated to the BO equation,
\begin{equation}\label{bo}
\partial_{t}u+\hil\partial^2_{x}u+u\partial_{x}u=0, \hskip15pt x,t\in\mathbb{R}.
\end{equation}
Their main goal was to establish the \lq\lq location'' of the $H^{1/2}$-norm of solutions which are globally bounded as times evolves. For this
purpose they assumed the following decay:

There exist $a\in [0,\frac{1}{2})$ and $c_0>0$ such that for any $T>0$
\begin{equation}\label{decay-bo}
\underset{t\in[0,T]}{\sup} \int_{\R} |u(x,t)|\,dx \le  c_0 \langle T\rangle^a, \hskip10pt \langle T\rangle=(1+T^2)^{1/2}.
\end{equation}

It was established in \cite{MP1} :

\begin{theorem}\label{main-bo}
Let $u=u(x,t)$ be a solution of the IVP associated to \eqref{bo} such that
\begin{equation}\label{main1-bo}
u\in C(\R: H^1(\R))\cap L^{\infty}_{\rm loc} (\R:L^1(\R))
\end{equation}
satisfying \eqref{decay}. Then

\begin{equation}\label{main2-bo}
\begin{split}
\int\limits_{\left\{t\gg 1\right\}}\!\!\!\frac{1}{t\log t}\Big\{\int\limits_{\R} \Big(u^2+\big(D_{x}^{\frac{1}{2}}u\big)^{2}\Big)\,\phi_{\alpha}'\Big(\frac{x}{\lambda(t)}\Big) (x,t)\,dx\Big\}\,dt< \infty.
\end{split}
\end{equation}

Hence
\begin{equation}\label{main3-bo}
\underset{t\uparrow \infty}{\liminf}\int_{\mathbb{R}} \Big(u^2+\big (D_x^{\frac{1}{2}}u\big)^{2}\Big)(x,t)\,\phi_{\alpha}'\Big(\frac{x}{\lambda(t_{n})}\Big)\,dx= 0
\end{equation}
with
\begin{equation}\label{main4-bo}
\lambda(t)=\frac{ct^b}{\log t},\hskip10pt a+b=1, \text{\hskip10pt and}\hskip10pt \phi'(x)=\frac{1}{1+ x^2}
\end{equation}
for any fixed $c>0$.
\end{theorem}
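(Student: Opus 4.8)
The plan is to run a virial (monotonicity) argument driven by a commutator expansion of the dispersive term. Normalize $\phi=\arctan$, so that $\phi'(x)=(1+x^2)^{-1}$ is positive, bounded and integrable, put $\lambda(t)=ct^{b}/\log t$ with $a+b=1$, and write $\psi(x,t)=\phi(x/\lambda(t))$. Since $u\in C(\R;H^1)\hookrightarrow L^\infty_tL^2$, the weighted mass $\mathcal I(t)=\int_\R u^2\psi\,dx$ is uniformly bounded, while the ``flux'' functional $\mathcal J(t)=\int_\R u\,\psi\,dx$ obeys $|\mathcal J(t)|\le\|u(t)\|_{L^1}\|\phi\|_\infty\le C\langle t\rangle^{a}$ by the decay hypothesis \eqref{decay-bo}. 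The whole strategy is to differentiate $\mathcal I$ and $\mathcal J$ in $t$, extract from the dispersive part a ``good'' term proportional to the integrand in \eqref{main3-bo}, and bound every remaining term by a time-integrable (or at worst $\langle t\rangle^{a}$-growing) quantity.

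Writing \eqref{bo} as $u_t=Lu-uu_x=\partial_x(-\hil\partial_x u-\tfrac12u^2)$ with $L=-\hil\partial_x^2$, and differentiating $\mathcal I$, the nonlinearity contributes $\tfrac23\int u^3\psi_x$, the time dependence of the weight contributes $\int u^2\psi_t$, and---because $L$ is skew-adjoint---the dispersive part symmetrizes into a commutator, $\int 2u\psi\,Lu=-\int u\,[L,\psi]u$. The heart of the argument, and the step I expect to be the main obstacle, is the expansion of this commutator: since $\psi$ varies on the slow scale $\lambda$, $[L,\psi]$ is one order below $L$ and its leading contribution has a fixed sign, giving
\begin{equation*}
-\int_\R u\,[L,\psi]u\,dx=-c_0\int_\R\big(D_x^{1/2}u\big)^2\psi_x\,dx+R,\qquad \psi_x=\tfrac1\lambda\,\phi'(x/\lambda)\ge0,
\end{equation*}
with $c_0>0$ and $R$ collecting genuinely lower-order, spatially decaying remainders. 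This Kato-type half-derivative smoothing is precisely the source of the $\big(D_x^{1/2}u\big)^2\phi'$ factor in \eqref{main3-bo}; justifying the expansion for the nonlocal operator $\hil\partial_x^2$ and controlling $R$ for $0<\alpha<1$ is the delicate point the paper signals by stressing commutator expansions.

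To generate the companion $u^2$ factor I would differentiate $\mathcal J$: integration by parts turns $\int u_t\psi$ into $\tfrac1{2\lambda}\int u^2\phi'(x/\lambda)$ plus a nonlocal term $\tfrac1\lambda\int(\hil\partial_x u)\,\phi'(x/\lambda)$ and a weight term $\int u\,\psi_t$. The first is the desired positive $u^2$ contribution, and the bound $|\mathcal J|\le C\langle t\rangle^{a}$ caps its time integral. It then remains to show that all error terms are integrable after weighting by $\lambda^{-1}$: the nonlinearity $\tfrac1\lambda\int u^3\phi'$ is dominated by $\|u\|_{L^\infty}\lesssim\|u\|_{H^1}$ times the good $u^2$ term; the scaling terms are handled using $|y\phi'(y)|\le\tfrac12$ and $\lambda'/\lambda\sim t^{-1}$; and the nonlocal and commutator remainders are moved onto the smooth decaying weight, gaining negative powers of $\lambda$. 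Combining a suitable multiple of the two identities so that the indefinite nonlinear terms are absorbed by the good terms, and integrating from a fixed $t_0\gg1$ to $T$, yields
\begin{equation*}
\int_{t_0}^{T}\frac1{\lambda(t)}\int_\R\Big(u^2+\big(D_x^{1/2}u\big)^2\Big)\phi'\Big(\tfrac{x}{\lambda(t)}\Big)\,dx\,dt\le C\langle T\rangle^{a}+C.
\end{equation*}

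Finally, an integration by parts in time converts this mildly growing bound into a convergent one. Writing $\tfrac1{t\log t}=\tfrac1{t^{a}(\log t)^2}\cdot\tfrac{\log t}{t^{b}}$, which is exactly where the constraint $a+b=1$ enters, and setting $G(T)=\int_{t_0}^{T}\tfrac1{\lambda}\int(\cdots)\,dt\le C\langle T\rangle^{a}$, the boundary term $G(T)/\big(t^{a}(\log t)^2\big)$ stays bounded while the remaining integral is dominated by $\int^\infty\tfrac{dt}{t(\log t)^2}<\infty$. This gives \eqref{main2-bo}. Since $\int_{t_0}^{\infty}\tfrac{dt}{t\log t}=+\infty$, the bracketed integral in \eqref{main2-bo} cannot stay bounded below, so there is a sequence $t_n\uparrow\infty$ along which it tends to $0$, which is \eqref{main3-bo}.
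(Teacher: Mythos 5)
Your proposal follows essentially the same strategy as the paper's (which is the Mu\~noz--Ponce argument that Section 3 generalizes): a linear functional $\int u\,\phi(x/\lambda)\,dx$ whose derivative produces the good $u^2\phi'$ term and is controlled by the $L^1$ growth hypothesis, a quadratic functional $\int u^2\phi(x/\lambda)\,dx$ whose dispersive part is symmetrized into the commutator $[\hil D_x^{2},\phi]$ and expanded to extract $(D_x^{1/2}u)^2\phi'$, the constraint $a+b=1$ with $b$ large enough to kill the $\lambda^{-(\alpha+2)}\|u\|_{L^1}$ remainders, and the divergence of $\int dt/(t\log t)$ to pass from \eqref{main2-bo} to \eqref{main3-bo}. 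The only organizational difference is that the paper inserts the weight $t^{-a}\log^{-2}t$ into the functionals \emph{before} differentiating, so that every error term is directly in $L^1(\{t\gg1\})$ and no final summation by parts is needed; your route of first proving a bound growing like $\langle T\rangle^a$ and then integrating by parts in $t$ against $t^{-a}\log^{-2}t$ is equivalent. The commutator expansion you flag as the main obstacle is exactly what the paper imports from Ginibre--Velo (Proposition \ref{p1} with $n=0$): the leading part $P_0(\alpha+2)=(\alpha+2)D^{(\alpha+1)/2}f'D^{(\alpha+1)/2}$ plus its $\hil$-conjugate gives the signed $(D_x^{(\alpha+1)/2}u)^2\phi'$ and $(\hil D_x^{(\alpha+1)/2}u)^2\phi'$ terms, and the remainder $R_0$ is $L^2$-bounded with norm $\lesssim\|\widehat{D^{\alpha+2}f}\|_{L^1}\sim\lambda^{-(\alpha+2)}$, which is the structure you anticipated; so this is a citable lemma rather than a gap, but as written your proof does not supply it.

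The one step that would not survive scrutiny as stated is the cubic term: you bound $\lambda^{-1}\int|u|^3\phi'$ by $\|u\|_{L^\infty}$ times the good $u^2$ term using $\|u\|_{L^\infty}\lesssim\|u\|_{H^1}$, which requires a \emph{uniform-in-time} $H^1$ bound. The hypothesis \eqref{main1-bo} is only $u\in C(\R;H^1)$, and the conserved quantities $M$ and $E$ control only the $H^{(1+\alpha)/2}$ norm ($H^{1/2}$ for BO), which does not embed in $L^\infty$. The paper circumvents this with Lemma \ref{lem2}, a localized Gagliardo--Nirenberg estimate giving $\int|u|^3\phi'\le c(\|u_0\|_{H^{(\alpha+1)/2}})\int u^2\phi'$ using only the energy-space norm supplied by the conservation laws; you should either substitute that lemma or explicitly add the uniform $H^1$ bound to your hypotheses.
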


\vspace{3mm}

In this case we also shall assume decay as in \eqref{decay-bo}, that is: There exist $a\in [0,\frac{1}{2+\alpha})$ and $c_0>0$ such that for any $T>0$
\begin{equation}\label{decay}
\underset{t\in[0,T]}{\sup} \int_{\R} |u(x,t)|\,dx \le  c_0 \langle T\rangle^a, \hskip10pt \langle T\rangle=(1+T^2)^{1/2}.
\end{equation}
Our main result in this work is as follows.
\vspace{3mm}

\begin{theorem}\label{main}
Let $u=u(x,t)$ be a solution of the IVP \eqref{dgbo} such that
\begin{equation}\label{main1}
u\in C(\R: H^1(\R))\cap L^{\infty}_{\rm loc} (\R:L^1(\R))
\end{equation}
satisfying \eqref{decay}. Then

\begin{equation}\label{main2}
\begin{split}
\int\limits_{\left\{t\gg 1\right\}}\!\!\!\frac{1}{t\log t}\Big\{\int\limits_{\R} \Big(u^{2}\!+\!\big(D_{x}^{\frac{\alpha+1}{2}}u\big)^{2}\!+\!\big(\hil D_{x}^{\frac{\alpha+1}{2}}u\big)^{2}\Big)(x,t)\,\phi_{\alpha}'\Big(\frac{x}{\lambda(t)}\Big)dx\Big\}\,dt< \infty.
\end{split}
\end{equation}

Therefore
\begin{equation}\label{main3}
\underset{t\uparrow \infty}{\liminf}\int_{\mathbb{R}} \Big(u^{2}+\big(D_{x}^{\frac{\alpha+1}{2}}u\big)^{2}+\big(\hil D_{x}^{\frac{\alpha+1}{2}}u\big)^{2}\Big)(x,t)\,\phi_{\alpha}'\Big(\frac{x}{\lambda(t_{n})}\Big)\,dx= 0
\end{equation}
with
\begin{equation}\label{main4}
\lambda(t)=\frac{ct^b}{\log t},\hskip10pt a+b=1, \text{\hskip10pt and}\hskip10pt \phi_{\alpha}'(x)=\frac{1}{\langle x\rangle^{\alpha+2}},
\end{equation}
for any fixed $c>0$.
\end{theorem}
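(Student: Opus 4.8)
The plan is to run a localized virial (monotonicity) argument in the spirit of the Benjamin--Ono case recorded in Theorem \ref{main-bo}, the genuinely new feature being the nonlocal fractional dispersion $D_x^{\alpha+1}$ with $0<\alpha<1$. First I would fix the increasing weight $\phi_\alpha$ with $\phi_\alpha'(x)=\langle x\rangle^{-(\alpha+2)}$ and $\lambda(t)=ct^b/\log t$ as in \eqref{main4}, and introduce
\begin{equation*}
\mathcal{I}(t)=\frac12\int_{\R}u^2(x,t)\,\phi_\alpha\Big(\tfrac{x}{\lambda(t)}\Big)\,dx .
\end{equation*}
Because $u\in C(\R:H^1(\R))$ is uniformly bounded in $L^2$ and $\phi_\alpha$ is bounded, $\mathcal{I}$ is uniformly bounded in $t$; this boundedness is what will be traded against the time integral of the good term. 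Differentiating and inserting \eqref{dgbo} in the form $u_t=D_x^{\alpha+1}\partial_x u-u\partial_x u$, an integration by parts in the nonlinear piece gives
\begin{equation*}
\mathcal{I}'(t)=\int_{\R}u\,D_x^{\alpha+1}\partial_x u\,\phi_\alpha\Big(\tfrac{x}{\lambda}\Big)\,dx+\frac{1}{3\lambda}\int_{\R}u^3\phi_\alpha'\Big(\tfrac{x}{\lambda}\Big)\,dx-\frac{\lambda'}{2\lambda}\int_{\R}u^2\,\tfrac{x}{\lambda}\,\phi_\alpha'\Big(\tfrac{x}{\lambda}\Big)\,dx .
\end{equation*}

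The core of the argument, and the step I expect to be the main obstacle, is extracting the positive local smoothing from the dispersive term. Writing $D_x^{\alpha+1}=D_x^{\frac{\alpha+1}{2}}D_x^{\frac{\alpha+1}{2}}$ and commuting the fractional operators through the slowly varying weight $\phi_\alpha(\cdot/\lambda)$ by means of commutator (pseudodifferential symbol) expansions, I would establish
\begin{equation*}
\int_{\R}u\,D_x^{\alpha+1}\partial_x u\,\phi_\alpha\Big(\tfrac{\cdot}{\lambda}\Big)\,dx=\frac{c_\alpha}{\lambda(t)}\int_{\R}\Big(\big(D_x^{\frac{\alpha+1}{2}}u\big)^2+\big(\hil D_x^{\frac{\alpha+1}{2}}u\big)^2\Big)\phi_\alpha'\Big(\tfrac{\cdot}{\lambda}\Big)\,dx+\mathcal{E}(t),
\end{equation*}
with $c_\alpha>0$. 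Both $D_x^{\frac{\alpha+1}{2}}u$ and $\hil D_x^{\frac{\alpha+1}{2}}u$ appear precisely because $D_x^{\frac{\alpha+1}{2}}$ is nonlocal, so the fractional analogue of integration by parts produces the operator together with its Hilbert-twisted companion; this is the sense in which commutator expansions are essential. The remainder $\mathcal{E}(t)$ gathers lower-order symbols, which carry additional negative powers of $\lambda(t)$ (hence are subordinate since $\lambda(t)\uparrow\infty$) together with a harmless multiple of $\tfrac1\lambda\int u^2\phi_\alpha'(\cdot/\lambda)\,dx$; the latter is what supplies the $u^2$ contribution in \eqref{main2}. The delicate point is to track the orders of the symbols carefully enough that $\mathcal{E}(t)$ is genuinely of lower order than the leading quadratic form.

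Next I would control the nonlinear and weight-in-time terms. Using the embedding $H^1(\R)\hookrightarrow L^\infty(\R)$ and the uniform $H^1$ bound, a weighted Gagliardo--Nirenberg inequality lets me dominate $\tfrac1\lambda|\int u^3\phi_\alpha'(\cdot/\lambda)\,dx|$, for every $\varepsilon>0$, by
\begin{equation*}
\frac{\varepsilon}{\lambda}\int_{\R}\Big(\big(D_x^{\frac{\alpha+1}{2}}u\big)^2+u^2\Big)\phi_\alpha'\Big(\tfrac{\cdot}{\lambda}\Big)\,dx+\frac{C_\varepsilon}{\lambda}\,\|u(t)\|_{L^1}^{\,\theta},
\end{equation*}
where the interpolation exponent $\theta$ is fixed by the dispersion; the $\varepsilon$-part is absorbed into the leading quadratic form of the previous step. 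It is exactly here that the hypothesis $a<\tfrac1{2+\alpha}$ in \eqref{decay} enters, being the threshold that renders the residual $L^1$-dependent contribution admissible after the time integration. The weight-in-time term is estimated using $\lambda'/\lambda\sim t^{-1}$ and $|\tfrac{x}{\lambda}\phi_\alpha'(\tfrac{x}{\lambda})|\le\langle x/\lambda\rangle^{-(\alpha+1)}$, so that it is bounded by $t^{-1}\|u(t)\|_{L^\infty}\|u(t)\|_{L^1}$.

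Finally I would integrate the resulting differential inequality in time. Since $\mathcal{I}$ is bounded, while the weight-in-time term integrates only to a borderline (at most logarithmic, in the representative case $a=0$) growth, the monotonicity yields a bound of the form $\int_{T_0}^{T}\tfrac{1}{\lambda(t)}G(t)\,dt$ controlled by that borderline quantity, where $G(t)$ denotes the inner integral in \eqref{main2}. In the representative case $a=0$ one has $b=1$ and $\lambda(t)\sim ct/\log t$, which is exactly the announced window of order $t/\log t$, and $\tfrac1{\lambda}\sim\tfrac{\log t}{ct}$; a dyadic decomposition in $t$ followed by an Abel summation then upgrades the borderline estimate to the convergent weighted bound $\int_{\{t\gg1\}}\tfrac{1}{t\log t}\,G(t)\,dt<\infty$, which is \eqref{main2}. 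The logarithm built into $\lambda$ together with the weight $\tfrac1{t\log t}$, and the balance $a+b=1$ with $a<\tfrac1{2+\alpha}$, are tuned precisely so that this summation converges for the full admissible range of $a$. Statement \eqref{main3} is then immediate: since $G\ge0$ and $\int_{\{t\gg1\}}\tfrac{dt}{t\log t}=\infty$, the finiteness in \eqref{main2} forces $\liminf_{t\uparrow\infty}G(t)=0$, producing a sequence $t_n\uparrow\infty$ along which the integral vanishes.
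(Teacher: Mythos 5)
Your overall architecture --- a weighted virial, the Ginibre--Velo commutator expansion to extract the positive quadratic form in $D_x^{\frac{\alpha+1}{2}}u$ and $\hil D_x^{\frac{\alpha+1}{2}}u$ against $\phi_\alpha'$, and the final ``$\frac{1}{t\log t}\notin L^1$ forces $\liminf=0$'' step --- matches the paper's Step 2. But there is a genuine gap at the heart of the argument: you run only the $L^2$-based functional $\mathcal{I}(t)=\frac12\int u^2\phi_\alpha(x/\lambda)$, and that single functional cannot produce the $u^2\,\phi_\alpha'$ contribution in \eqref{main2}. You claim this contribution is ``supplied by a harmless multiple of $\tfrac1\lambda\int u^2\phi_\alpha'$'' sitting inside the remainder $\mathcal{E}(t)$ of the commutator expansion. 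That is not how the expansion works: with $n=0$ the principal part $P_0(\alpha+2)$ yields exactly the two half-derivative terms (with weight $\phi_\alpha'/\lambda$) and nothing of order zero, while the remainder $R_0(\alpha+2)$ is merely an $L^2$-bounded operator with no sign --- it is an error term of size $O(\lambda^{-(\alpha+2)})$, not a source of coercivity. Consequently your $\varepsilon$-absorption of the cubic term also breaks down: after Gagliardo--Nirenberg you are left with $\tfrac{\varepsilon}{\lambda}\int u^2\phi_\alpha'$, and there is no positive $u^2\phi_\alpha'$ term on the good side of the inequality into which to absorb it. Without that, the cubic term can only be bounded crudely by $\tfrac{C}{\lambda}\|u\|_2^2\sim \log t/t^{b}$, whose time integral grows like $\log^2 T$ when $a=0$, $b=1$, and your Abel-summation upgrade (which would work for an $O(\log T)$ bound) then fails.

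What is missing is the paper's \emph{first} step: a virial that is linear in $u$, namely $t^{-a}\log^{-2}t\int u\,\phi_\alpha(x/\lambda)\,dx$. Differentiating this and using the Burgers nonlinearity $u\p_x u$ produces, after one integration by parts, the signed quantity $\tfrac{1}{2\lambda t^a\log^2 t}\int u^2\phi_\alpha'(x/\lambda)\,dx$, and it is here --- not in the cubic term of the $L^2$ virial --- that the hypothesis \eqref{decay} with $a<\tfrac{1}{\alpha+2}$ is used (the error terms are all controlled by $\|u(t)\|_1\lesssim t^a$, and the dispersive error forces $b(\alpha+2)-\alpha>1$, i.e. $b>\tfrac{\alpha+1}{\alpha+2}$). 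This yields \eqref{e10}, which is simultaneously the $u^2$ part of \eqref{main2} and the a priori bound that renders the cubic term in the $L^2$ virial time-integrable via Lemma \ref{lem2} ($\int|u|^3\phi_\alpha'\lesssim\int u^2\phi_\alpha'$). Note also that both functionals carry the prefactor $t^{-a}\log^{-2}t$, which is what converts the natural weight $\tfrac{1}{\lambda(t)t^a\log^2 t}$ into exactly $\tfrac{1}{t\log t}$ and makes all error terms absolutely integrable without any borderline summation. Your proposal as written cannot close without this first-order identity.
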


\begin{remark} As in \cite{MP2} our approach was inspired by the works of Kowalczyk, Martel and Mu\~noz  \cite{KMM}-\cite{KMM1} 
concerning the decay of solutions in  $1+1$ dimensional scalar field models. 
\end{remark}

\begin{remark} Theorem \ref{main} proves that the limit infimum, as time tends to infinity, for any uniformly bounded (or with mild
growth) $L^1$ solution  of the DGBO equation converges to zero locally in an increasing in time region of the space. This eliminates the existence of solutions moving with a speed slower than a traveling wave. In this regard, from the argument in \cite{MP1}, i.e. multiplying an appropriate solution $u(x,t)$ of the equation in \eqref{dgbo} by $x$ and integrating the result, one gets
\begin{equation}
\label{brea}
\frac{d\;}{dt}\int xu(x,t)dx-\frac{1}{2}\,\int u^2(x,t)dx=0.
\end{equation}
This tells us that any appropriate  non-trivial solution $u(x,t)$ cannot be time periodic (breather).

 To justify the computations in \eqref{brea} it suffices to have an initial data $u_0$ satisfying : $\,u_0,\,D^{1+\alpha}_x\partial_xu_0\in L^2(\mathbb R:x^4dx)\cap H^7(\mathbb R)$, (see \cite{flp}).

\end{remark}

\begin{remark} We recall that in \cite{MP2}  for the case of the KdV a similar result was established in a space region with lesser growth but with the whole limit as $\,t \uparrow \infty$ instead of the limit infimum.
\end{remark}

\begin{remark} The argument of proof given here generalize those  in \cite{MP1} for the case $\alpha=0$.
\end{remark}

\begin{remark} If we consider solutions of the IVP associated to the fractional KdV equation,
\begin{equation}\label{fkdv}
\left\{
\begin{array}{ll}
\partial_{t}u+D_{x}^{\alpha}\partial_{x}u+u\partial_{x}u=0, & x,t\in\mathbb{R}, \;0<\alpha<1, \\
u(x,0)=u_{0}(x).&  \\
\end{array} 
\right.
\end{equation}
A result like in Theorem \ref{main} would be true whenever the property described in Lemma \ref{lem2} below holds for
$\alpha>6/7$ (see \cite{mpv}) where global solutions are known.
\end{remark}

This paper is organized as follows:  In Section 2 we describe the main technical tools we will use to establish our main result. The
proof of Theorem \ref{main} will be given in Section 3.

\medspace

\section{Preliminaries}

\subsection{{Commutator Expansions}}
We start by  presenting  several auxiliary results obtained by Ginibre and Velo \cite{{GV1}}, \cite{GV2} useful in our analysis.

Let $\a=2\mu+1>1$,  let  $n$ be a nonnegative integer and $f$ be  a smooth  function with suitable  decay at infinity, for instance,  
$f'\in C^{\infty}_{0}(\mathbb{R})$. 

We define the operator 

\begin{equation}\label{e2}
R_{n}(\a)=-\left[\mathcal{H} D^{\a}; f\right]-\frac{1}{2}\left(P_{n}(\a)-\mathcal{H}P_{n}(\a)\mathcal{H}\right),
\end{equation}
where 
\begin{equation}\label{e3}
P_{n}(\a)=\a\sum_{0\leq j\leq n}c_{2j+1}(-1)^{j}4^{-j}D^{\mu-j}f^{(2j+1)}D^{\mu-j},
\end{equation}
and the constants $c_{2j+1}$ are given by the  following  formula
\begin{equation}
c_{1}=1\quad\mbox{and}\quad c_{2j+1}=\frac{1}{(2j+1)!}\prod_{0\leq k<j}\left(\a^{2}-\left(2k+1\right)^{2}\right).
\end{equation}

\begin{proposition}\label{p1}
	Let $n$  be a non-negative  integer,   $\a\geq  1,\,$ and   $ \sigma\geq 0,$  be such that 
	\begin{equation}\label{eq21}
	2n+1\leq \a+2\sigma\leq2n+3.
	\end{equation}
	
	Then 
	\begin{itemize}
		\item[(a)] The operator $D^{\sigma}R_{n}(\a)D^{\sigma}$ is bounded in $L^{2}$ with norm 
		\begin{equation}\label{eq98}
		\left\|D^{\sigma}R_{n}(\a)D^{\sigma}h\right\|_{2}\leq C(2\pi)^{-1/2}\left\|\widehat{\left(D^{\a+2\sigma}f\right)}\right\|_{1}\|h\|_{2}.
		\end{equation}	
		If $a\geq 2n+1,$ one can take  $C=1.$
		
		\item[(b)] Assume in addition  that
		\begin{equation*}
		2n+1\leq \a+2\sigma<2n+3.
		\end{equation*}
		Then the operator ${\displaystyle D^{\sigma}R_{n}(\a)D^{\sigma}}$ is compact in $L^{2}(\mathbb{R}).$
	\end{itemize}
\end{proposition}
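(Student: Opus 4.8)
The plan is to pass to the Fourier side, where $R_{n}(\a)$ and its conjugate $D^{\sigma}R_{n}(\a)D^{\sigma}$ become integral operators with explicit symbols, to reduce the $L^{2}$ bound in (a) to a single pointwise estimate on the symbol, and then to extract that estimate from a one–variable remainder analysis. Writing $a(\xi)=-i\,\sgn(\xi)|\xi|^{\a}$ for the multiplier of $\mathcal H D^{\a}$ and using the product rule $\widehat{fh}=\tfrac1{2\pi}\,\hat f*\hat h$, a direct computation gives
\[
\big([\mathcal H D^{\a};f]h\big)^{\wedge}(\xi)=\frac1{2\pi}\int_{\R}\big(a(\xi)-a(\eta)\big)\,\hat f(\xi-\eta)\,\hat h(\eta)\,d\eta,
\]
so that $[\mathcal H D^{\a};f]$ has symbol $(a(\xi)-a(\eta))\hat f(\xi-\eta)$; conjugating any such symbol by $D^{\sigma}$ merely multiplies it by $|\xi|^{\sigma}|\eta|^{\sigma}$. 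Once I know that the full symbol $\rho(\xi,\eta)$ of $D^{\sigma}R_{n}(\a)D^{\sigma}$ obeys the pointwise bound $|\rho(\xi,\eta)|\le C\,|\xi-\eta|^{\a+2\sigma}\,|\hat f(\xi-\eta)|=C\,|\widehat{(D^{\a+2\sigma}f)}(\xi-\eta)|$, the operator estimate is immediate: dominating $\hat h$ and applying Young's inequality together with Plancherel turns this majorant (a function of $\xi-\eta$ alone) into \eqref{eq98} with the same constant $C$, the factor $(2\pi)^{-1/2}$ being the Fourier normalization. Everything thus reduces to the pointwise symbol estimate, and it is there that the role of $P_{n}(\a)$ and of the constraint \eqref{eq21} becomes visible.

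I would next compute the remaining symbols. Since $\widehat{f^{(2j+1)}}(\zeta)=(i\zeta)^{2j+1}\hat f(\zeta)$, the operator $D^{\mu-j}f^{(2j+1)}D^{\mu-j}$ has symbol $i(-1)^{j}(\xi-\eta)^{2j+1}|\xi|^{\mu-j}|\eta|^{\mu-j}\hat f(\xi-\eta)$, and because $\mathcal H$ contributes a factor $-i\,\sgn$, the term $\tfrac12\big(P_{n}(\a)-\mathcal H P_{n}(\a)\mathcal H\big)$ has symbol
\[
\tfrac12\big(1+\sgn(\xi)\sgn(\eta)\big)\,i\a\sum_{j=0}^{n}c_{2j+1}4^{-j}(\xi-\eta)^{2j+1}|\xi|^{\mu-j}|\eta|^{\mu-j}\,\hat f(\xi-\eta).
\]
The prefactor $\tfrac12(1+\sgn(\xi)\sgn(\eta))$ equals $1$ when $\xi,\eta$ share a sign and $0$ otherwise, which is exactly what splits the estimate into two cases. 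When $\xi$ and $\eta$ have opposite signs the $P_{n}$–contribution disappears, and after stripping the factors $|\xi|^{\sigma}|\eta|^{\sigma}$ the claim reduces to the elementary inequality $|\xi|^{\sigma}|\eta|^{\sigma}(|\xi|^{\a}+|\eta|^{\a})\le(|\xi|+|\eta|)^{\a+2\sigma}=|\xi-\eta|^{\a+2\sigma}$, which holds with constant $1$ whenever $\a\ge 1$ (by homogeneity and $\theta^{\a}+(1-\theta)^{\a}\le 1$ for $\a\ge1$). The real content is the same–sign case.

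In the same–sign case, say $\xi,\eta>0$ (the negative case being symmetric by oddness), I would use the substitution $\xi=r e^{u}$, $\eta=r e^{-u}$ with $r>0$, so that $\xi\eta=r^{2}$, $\xi-\eta=2r\sinh u$ and $\xi^{\a}-\eta^{\a}=2r^{\a}\sinh(\a u)$, while $4^{-j}(\xi-\eta)^{2j+1}(\xi\eta)^{\mu-j}=2r^{\a}(\sinh u)^{2j+1}$ (recall $\a=2\mu+1$). After cancelling the common power $r^{\a+2\sigma}=(\xi\eta)^{\sigma}r^{\a}$, the pointwise estimate collapses to the one–variable inequality
\[
\Big|\,\sinh(\a u)-\a\sum_{j=0}^{n}c_{2j+1}(\sinh u)^{2j+1}\Big|\le C'\,|\sinh u|^{\a+2\sigma},\qquad u\in\R .
\]
The very purpose of the constants $c_{2j+1}$ is that $\a\sum_{j\ge0}c_{2j+1}(\sinh u)^{2j+1}$ is the expansion of $\sinh(\a u)$ in powers of $\sinh u$ (checked term by term; e.g. $c_{3}=(\a^{2}-1)/6$), so the left-hand side is the genuine remainder $\a\sum_{j>n}c_{2j+1}(\sinh u)^{2j+1}$ of that expansion. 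Writing $s=|\sinh u|$, I must show this remainder is $\le C's^{\a+2\sigma}$ for all $s\ge0$; this is the \emph{main obstacle}. Near $s=0$ the remainder is $O(s^{2n+3})$, so the upper constraint $\a+2\sigma\le 2n+3$ is exactly what is needed; as $s\to\infty$ the full sum grows like $s^{\a}$ while the subtracted polynomial is $O(s^{2n+1})$, so the lower constraint $2n+1\le\a+2\sigma$ governs the large-$s$ regime; the interpolating middle range, and the sharp value $C=1$ when $\a\ge 2n+1$, require monitoring the sign and size of the $c_{2j+1}$, most cleanly through the integral form of the Taylor remainder of $u\mapsto\sinh(\a u)$.

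Finally, for part (b) I would upgrade boundedness to compactness using the strict inequality $\a+2\sigma<2n+3$. Under strictness the one-variable estimate improves to $o(s^{\a+2\sigma})$ as $s\to0$, i.e. the symbol $\rho(\xi,\eta)$ acquires an extra factor that vanishes on the diagonal $\xi=\eta$ (and, together with $2n+1<\a+2\sigma$, also decays as $\xi/\eta$ degenerates). Combined with the decay of $\hat f$ coming from $f'\in C^{\infty}_{0}$, this lets me approximate $D^{\sigma}R_{n}(\a)D^{\sigma}$ in operator norm by operators whose symbols are smoothly cut off to a compact set bounded away from the diagonal — each of which is Hilbert–Schmidt — while the approximation error is controlled by the estimate of part (a) applied to the complementary pieces. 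Compactness then follows since a norm limit of compact operators is compact.
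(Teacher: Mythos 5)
The paper does not actually prove this proposition --- it is imported from Ginibre--Velo and the stated ``proof'' is only the citation of Proposition 2.2 in \cite{GV2} --- and your argument reconstructs essentially the proof of that reference: Fourier kernels, the $\sgn(\xi)\sgn(\eta)$ splitting, and the substitution $\xi=re^{u}$, $\eta=re^{-u}$ reducing everything to the remainder of the expansion of $\sinh(\a u)$ in powers of $\sinh u$, whose coefficients are exactly $\a c_{2j+1}$ (they solve the recursion coming from $(1+s^{2})g''+sg'-\a^{2}g=0$ for $g(s)=\sinh(\a\,\mathrm{arcsinh}\,s)$). Your identification of the two constraints in \eqref{eq21} with the $s\to 0$ and $s\to\infty$ regimes is correct, the opposite--sign case with constant $1$ for $\a\ge 1$ is right, and the intermediate range of $s$ follows by continuity since $s^{\a+2\sigma}$ is bounded below there, so part (a) with an unspecified constant $C$ is sound in outline. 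The only items left at the level of assertion are the sharp constant $C=1$ for $\a\geq 2n+1$, which requires sign and size control of the tail coefficients $c_{2j+1}$, $j>n$ (this is where \cite{GV2} does real work), and the operator--norm approximation by Hilbert--Schmidt pieces underlying the compactness claim in (b); both are genuine but standard completions along the lines you indicate rather than flaws in the approach.
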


\begin{proof}
	See  Proposition 2.2 in \cite{GV2}.
\end{proof}

\subsection{Technical Tools}

We will first consider the following functions which are the key ingredient in the energy estimates.

For $\alpha\in(0,1)$ fixed,  we define the function
\begin{equation}
\phi_{\alpha}(x):=\int_{-\infty}^{x}\frac{\mathrm{d}s}{\langle s\rangle^{\alpha+2}}, \quad x\in \mathbb{R}
\end{equation}
and the bracket above denotes  $\langle s\rangle:=\sqrt{1+s^{2}}.$

Notice that for $\alpha\in (0,1)$ the function $\phi_{\alpha}$ is uniformly bounded. More  precisely, it satisfies
\begin{equation}\label{e6}
\phi_{\alpha}(x)\leq 2\left(\frac{1}{\alpha+3}\right)\quad \mbox{for all}\quad x\in \mathbb{R}.
\end{equation}

\begin{lemma}\label{lem1}
For any $\alpha\in (0,1)$ the Fourier transform of the function $\phi_{\alpha}'$  is given by
	
\begin{equation}
\widehat{\phi_{\alpha}'}(\xi)=\frac{\sqrt{\pi}}{\Gamma\left(\frac{\alpha+2}{2}\right)}\int_{0}^{\infty} e^{-s}s^{\frac{\alpha-1}{2}} e^{-\frac{\pi^{2}\xi^{2}}{s}}\,\mathrm{d}s.
\end{equation}
\end{lemma}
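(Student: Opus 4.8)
The plan is to recognize that $\phi_\alpha'(x)=\langle x\rangle^{-(\alpha+2)}=(1+x^2)^{-(\alpha+2)/2}$ lies in $L^1(\R)$ (since $\alpha+2>1$), so its Fourier transform is a genuine, continuous function and may be computed by Fubini-type manipulations. The whole identity is an instance of the subordination trick for negative powers: I would introduce $\nu:=\frac{\alpha+2}{2}$ and invoke the elementary Gamma-function representation
\begin{equation*}
A^{-\nu}=\frac{1}{\Gamma(\nu)}\int_0^\infty s^{\nu-1}e^{-sA}\,\mathrm{d}s,\qquad A>0,\ \nu>0,
\end{equation*}
applied to $A=1+x^2$. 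Since $\nu-1=\alpha/2$, this writes $\phi_\alpha'$ as a superposition of Gaussians:
\begin{equation*}
\phi_\alpha'(x)=(1+x^2)^{-\nu}=\frac{1}{\Gamma(\nu)}\int_0^\infty s^{\alpha/2}\,e^{-s}\,e^{-s x^2}\,\mathrm{d}s.
\end{equation*}

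The next step is to take the Fourier transform in $x$ and exchange the order of the $x$- and $s$-integrations. With the normalization $\widehat{f}(\xi)=\int_{\R}e^{-2\pi i x\xi}f(x)\,\mathrm{d}x$ used throughout, the classical Gaussian computation gives $\widehat{e^{-s x^2}}(\xi)=\sqrt{\pi/s}\;e^{-\pi^2\xi^2/s}$, which is precisely the source of the factors $\sqrt{\pi}$ and $e^{-\pi^2\xi^2/s}$ in the claimed formula. Substituting and collecting the powers of $s$, namely $s^{\alpha/2}\cdot s^{-1/2}=s^{(\alpha-1)/2}$, yields
\begin{equation*}
\widehat{\phi_\alpha'}(\xi)=\frac{\sqrt{\pi}}{\Gamma\!\left(\frac{\alpha+2}{2}\right)}\int_0^\infty s^{\frac{\alpha-1}{2}}\,e^{-s}\,e^{-\frac{\pi^2\xi^2}{s}}\,\mathrm{d}s,
\end{equation*}
which is exactly the asserted identity.

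The only point requiring genuine care — and the step I would treat as the main (though modest) obstacle — is justifying the interchange of integrals. I would handle it by Tonelli applied to the nonnegative integrand $\tfrac{1}{\Gamma(\nu)}s^{\nu-1}e^{-s(1+x^2)}$: its double integral over $(x,s)\in\R\times(0,\infty)$ equals $\int_{\R}(1+x^2)^{-\nu}\,\mathrm{d}x<\infty$, so the iterated integrals of the absolute value coincide and are finite. This licenses Fubini for the complex integrand $e^{-2\pi i x\xi}\,s^{\nu-1}e^{-s(1+x^2)}$, after which the computation above is purely mechanical. One should also note that the resulting $s$-integral converges at both endpoints: near $s=0$ the factor $e^{-\pi^2\xi^2/s}$ forces rapid decay for $\xi\neq0$ (and for $\xi=0$ one recovers the integrable singularity $s^{(\alpha-1)/2}$ with $(\alpha-1)/2>-1$), while near $s=\infty$ the factor $e^{-s}$ controls everything, confirming that both sides define the same continuous function of $\xi$.
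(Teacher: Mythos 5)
Your proposal is correct and follows essentially the same route as the paper: both write $\phi_\alpha'(x)=(1+x^2)^{-\frac{\alpha+2}{2}}$ via the Gamma-function subordination identity as a superposition $\frac{1}{\Gamma(\frac{\alpha+2}{2})}\int_0^\infty e^{-s}s^{\alpha/2}e^{-sx^2}\,\mathrm{d}s$, interchange the integrals by Fubini, and apply the Gaussian Fourier transform $\widehat{e^{-sx^2}}(\xi)=\sqrt{\pi/s}\,e^{-\pi^2\xi^2/s}$. Your added justification of the interchange via Tonelli is a point the paper passes over silently, so if anything your write-up is slightly more complete.
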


\begin{proof}
	Observe that $\phi_{\alpha}'$ can be rewritten as
	\begin{equation}
	\phi_{\alpha}'(x)=\frac{1}{\Gamma\left(\frac{\alpha+2}{2}\right)}\int_{0}^{\infty} e^{-\left(1+x^{2}\right)s}s^{\frac{\alpha}{2}}\,\mathrm{d}s,
	\end{equation}
with this at hand and  Fubini's theorem it follows that 

\begin{equation}
\begin{split}
\widehat{\phi_{\alpha}'}(\xi)&=\int_{\mathbb{R}}e^{-2i\pi x\xi}\,\phi_{\alpha}'(x)\,\mathrm{d}x\\
	&=\frac{1}{\Gamma\left(\frac{\alpha+2}{2}\right)}\int_{\mathbb{R}}\int_{0}^{\infty}e^{-2i\pi x\xi}e^{-\left(1+x^{2}\right)s}s^{\frac{\alpha}{2}}\,\mathrm{d}s\mathrm{d}x\\
	&=\frac{1}{\Gamma\left(\frac{\alpha+2}{2}\right)}\int_{0}^{\infty}e^{-s}s^{\frac{\alpha}{2}}\int_{\mathbb{R}}e^{-2\pi ix\xi}e^{-x^{2}s}\,\mathrm{d}x\,\mathrm{d}s\\
	&=\frac{1}{\Gamma\left(\frac{\alpha+2}{2}\right)}\int_{0}^{\infty}e^{-s}s^{\frac{\alpha}{2}}\left(\sqrt{\frac{\pi}{s}}e^{-\frac{\pi^{2} \xi^{2}}{s}}\right)\,\mathrm{d}s\\
	&=\frac{\sqrt{\pi}}{\Gamma\left(\frac{\alpha+2}{2}\right)}\int_{0}^{\infty}e^{-s}s^{\frac{\alpha-1}{2}}e^{-\frac{\pi^{2} \xi^{2}}{s}}\,\mathrm{d}s.
\end{split}
\end{equation}
Therefore,
\begin{equation}
\widehat{\phi_{\alpha}'}(\xi)=\frac{\sqrt{\pi}}{\Gamma\left(\frac{\alpha+2}{2}\right)}\int_{0}^{\infty}e^{-s}s^{\frac{\alpha-1}{2}}e^{-\frac{\pi^{2} \xi^{2}}{s}}\,\mathrm{d}s, \quad\mbox{for}\quad \alpha\in(0,1).
\end{equation}
\end{proof}

The next lemma contains a useful interpolation estimate and a fractional Leibniz' rule needed in our arguments.
\begin{lemma}\label{lem3}
	Let $\alpha\in [0,1]$, it holds that
	\begin{equation}\label{e1.1}
	\|f\|_{p}\le c_p\, \|f\|_{2}^{1-\frac{p-2}{(\alpha+1)\,p}}\|D^{\frac{1+\alpha}{2}}_{x} f\|_{2}^{\frac{p-2}{(\alpha+1)\,p}}, \hskip10pt 2\le p<\infty,
	\end{equation}
	
	\begin{equation}\label{e2.1}
	\left\|D^{\frac{\alpha+1}{2}}_{x}(fg)-g D^{\frac{\alpha+1}{2}}_xf\right\|_{2} \lesssim \|f\|_{4} \|D^{\frac{\alpha+1}{2}}_xg\|_{4}.
	\end{equation}
\end{lemma}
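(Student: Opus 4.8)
The plan is to treat the two assertions of Lemma~\ref{lem3} separately: \eqref{e1.1} is a one–dimensional fractional Gagliardo--Nirenberg inequality, while \eqref{e2.1} is a Kenig--Ponce--Vega type fractional Leibniz (commutator) estimate. Throughout write $s=\frac{1+\alpha}{2}\in[\tfrac12,1]$. For \eqref{e1.1} I would first read off the exponent by scaling: testing the inequality against the dilations $f_\lambda(x)=f(\lambda x)$ forces the homogeneity relation $s\theta=\tfrac12-\tfrac1p$, i.e. $\theta=\frac{p-2}{(\alpha+1)p}$, which is exactly the power appearing in the statement. This confirms there is no room to improve the exponent and tells me how to balance the two frequency regimes.

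To prove \eqref{e1.1} I would decompose $f$ into Littlewood--Paley pieces $f=\sum_j P_j f$ and split at a frequency $N$. On the low part, Bernstein's inequality on $\R$ gives $\|P_{\le N}f\|_p\lesssim N^{\frac12-\frac1p}\|f\|_2$. On each high block $P_M f$ with $M\ge N$, Bernstein combined with $\|P_M f\|_2\lesssim M^{-s}\|D^{s}_x f\|_2$ gives $\|P_M f\|_p\lesssim M^{\frac12-\frac1p-s}\|D^{s}_x f\|_2$. Since $s\ge\tfrac12$ and $p\ge2$ the exponent $\frac12-\frac1p-s=-\frac1p<0$ at the endpoint $s=\tfrac12$ and is negative throughout, so the geometric series over dyadic $M\ge N$ converges and $\|P_{>N}f\|_p\lesssim N^{\frac12-\frac1p-s}\|D^{s}_x f\|_2$. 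Adding the two contributions yields $\|f\|_p\lesssim N^{\frac12-\frac1p}\|f\|_2+N^{\frac12-\frac1p-s}\|D^{s}_x f\|_2$, and choosing $N^{s}\sim\|D^{s}_x f\|_2/\|f\|_2$ produces \eqref{e1.1}. Running the argument through the dyadic blocks rather than an $L^\infty$ endpoint is what keeps the case $\alpha=0$ (where $H^{1/2}$ fails to embed in $L^\infty$) within reach.

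For \eqref{e2.1} I would pass to the Fourier side. Writing the commutator $D^{s}_x(fg)-gD^{s}_xf$ as a bilinear operator with $\zeta,\eta$ the frequencies of $f$ and $g$, its symbol is $m(\zeta,\eta)=|\zeta+\eta|^{s}-|\zeta|^{s}$. The heart of the matter is the pointwise bound $|m(\zeta,\eta)|\le|\eta|^{s}$, which follows from the elementary concavity inequality $\bigl||a|^{s}-|b|^{s}\bigr|\le|a-b|^{s}$, valid for $0\le s\le1$, applied with $a=\zeta+\eta$ and $b=\zeta$. Hence $m(\zeta,\eta)=|\eta|^{s}\,\widetilde m(\zeta,\eta)$ with $\widetilde m$ uniformly bounded, so morally all $s$ derivatives land on $g$ and none on $f$, and the target $\|f\|_4\|D^{s}_x g\|_4$ is just the H\"older bound (using $\tfrac14+\tfrac14=\tfrac12$) for the model term $f\,D^{s}_x g$.

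To make this rigorous I would run a paraproduct decomposition into the regimes $|\eta|\lesssim|\zeta|$ and $|\eta|\gtrsim|\zeta|$. In the first regime the mean value theorem refines the estimate to $|m|\lesssim|\zeta|^{s-1}|\eta|\le|\eta|^{s}$ and, more importantly, shows $\widetilde m$ is smooth there, so the associated bilinear multiplier is of Coifman--Meyer type and maps $L^4\times L^4\to L^2$; in the second regime $m\approx|\eta|^{s}$ directly and the same boundedness applies. The main obstacle is the verification of the Coifman--Meyer derivative bounds for $\widetilde m$ near the frequency origin and along the diagonal $\zeta\approx-\eta$, where $|\cdot|^{s}$ is not smooth; this is handled by the low–frequency truncation built into the Littlewood--Paley blocks (the low pieces are controlled directly by $L^2$/$L^4$ norms via Bernstein). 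Alternatively, one may simply invoke the fractional Leibniz rule of Kenig--Ponce--Vega, write $D^{s}_x(fg)-gD^{s}_xf=\bigl(D^{s}_x(fg)-fD^{s}_xg-gD^{s}_xf\bigr)+fD^{s}_xg$, bound the genuine remainder by the known commutator estimate, and absorb the Leibniz term $fD^{s}_xg$ by H\"older as above.
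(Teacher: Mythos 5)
Your proposal is correct in substance but proceeds differently from the paper, whose entire proof is two citations: \eqref{e1.1} ``follows from complex interpolation and Sobolev embedding'' (i.e.\ $\|f\|_{p}\lesssim\|f\|_{\dot H^{s\theta}}$ with $s\theta=\tfrac12-\tfrac1p$, followed by $\|f\|_{\dot H^{s\theta}}\le\|f\|_{2}^{1-\theta}\|D^{s}_{x}f\|_{2}^{\theta}$ on the Fourier side), and \eqref{e2.1} is quoted from Theorem A.8 of Kenig--Ponce--Vega. Your Littlewood--Paley/Bernstein proof of \eqref{e1.1}, with the split at a threshold frequency $N$ and the optimization $N^{s}\sim\|D^{s}_{x}f\|_{2}/\|f\|_{2}$, is a correct and self-contained alternative; the scaling computation pins down the same exponent $\theta=\frac{p-2}{(\alpha+1)p}$, and your observation that the dyadic summation (rather than an $L^\infty$ endpoint) is what survives at $\alpha=0$, $p<\infty$ is exactly the right point. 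For \eqref{e2.1}, your fallback --- writing $D^{s}_{x}(fg)-gD^{s}_{x}f=\bigl(D^{s}_{x}(fg)-fD^{s}_{x}g-gD^{s}_{x}f\bigr)+fD^{s}_{x}g$, citing the Kenig--Ponce--Vega remainder bound with exponents $(p_1,p_2)=(4,4)$, and using H\"older on $fD^{s}_{x}g$ --- is precisely the paper's argument. Your primary route via the bilinear symbol $m(\zeta,\eta)=|\zeta+\eta|^{s}-|\zeta|^{s}$ is the right heuristic, but be aware that the soft spot you flag is a genuine one, not a formality: on the high--high--to--low diagonal $\zeta\approx-\eta$ the normalized symbol $m/|\eta|^{s}$ fails the Coifman--Meyer derivative bounds (derivatives of $|\zeta+\eta|^{s}$ blow up like $|\zeta+\eta|^{s-|\beta|}$ there), and a mere ``low-frequency truncation'' does not repair this; one must sum over low output frequencies $2^{k}$ and use the factor $2^{sk}$ coming from $s>0$ to make that sum converge. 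As written, that step is incomplete, so the paraproduct route should be regarded as a sketch; the KPV reduction you give at the end is the clean (and the paper's) way to close the argument.
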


\begin{proof}
	
	The Gagliardo-Nirenberg inequality \eqref{e1.1} follows from complex interpolation and Sobolev embedding.
	
	The estimate \eqref{e2.1} is derived from the Leibniz rule for fractional derivatives in \cite{kpv-93} (Theorem A.8).
\end{proof}

\vspace{3mm} 

Employing the argument to establish  (2.30) in Lemma 3  of  \cite{km}  we obtain.

\begin{lemma}\label{lem2} Let $u$ be a solution of the IVP \eqref{dgbo} and $\phi_{\alpha}$  the function defined in \eqref{dgbo}.	It holds that
	\begin{equation}\label{e3.1}
	\int_{\mathbb{R}} | v(x-\rho(t), t)|^3 \phi_{\alpha}'(\tilde{x})\,dx \le c\left(\|v_0\|_{\frac{\alpha+1}{2},2}\right)\int_{\mathbb{R}} v^2(x-\rho(t), t) \phi_{\alpha}'(\tilde{x})\, dx.
	\end{equation}
where $\rho(t)\in C^1(\R)$.
\end{lemma}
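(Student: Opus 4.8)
The plan is to reduce the weighted cubic estimate to a uniform-in-time bound on the $H^{\frac{\alpha+1}{2}}$-norm of the solution. First I would dispose of the translation: after the change of variables $\tilde x = x-\rho(t)$ the asserted inequality is equivalent to
\[
\int_{\mathbb R}|v(\tilde x,t)|^3\,\phi_\alpha'(\tilde x)\,d\tilde x \le c\int_{\mathbb R} v^2(\tilde x,t)\,\phi_\alpha'(\tilde x)\,d\tilde x,
\]
since the positive weight $\phi_\alpha'$ is carried along untouched by the shift. Writing $|v|^3\phi_\alpha'=|v|\,(v^2\phi_\alpha')$ and pulling out the supremum norm gives
\[
\int_{\mathbb R}|v|^3\phi_\alpha'\,dx \le \|v(\cdot,t)\|_{\infty}\int_{\mathbb R} v^2\phi_\alpha'\,dx,
\]
so it suffices to bound $\|v(\cdot,t)\|_{\infty}$ by a constant depending only on $\|v_0\|_{\frac{\alpha+1}{2},2}$.

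Second, since $\frac{\alpha+1}{2}>\frac12$ for every $\alpha\in(0,1)$, the one–dimensional Sobolev embedding $H^{\frac{\alpha+1}{2}}(\mathbb R)\hookrightarrow L^\infty(\mathbb R)$ yields $\|v(\cdot,t)\|_{\infty}\le c\,\|v(\cdot,t)\|_{\frac{\alpha+1}{2},2}$. Thus the whole estimate is reduced to the a priori bound $\sup_t\|v(\cdot,t)\|_{\frac{\alpha+1}{2},2}\le c(\|v_0\|_{\frac{\alpha+1}{2},2})$, which is where the conservation laws enter.

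Third, and this is the real content, I would produce the uniform $H^{\frac{\alpha+1}{2}}$-bound from the conserved quantities $M$ and $E$ in \eqref{laws}. From the definition of $E$,
\[
\big\|D^{\frac{\alpha+1}{2}}_x v(\cdot,t)\big\|_2^2 = 2E(v_0)+\tfrac13\int_{\mathbb R} v^3(x,t)\,dx,
\]
and the cubic term is controlled by the Gagliardo–Nirenberg inequality \eqref{e1.1} with $p=3$:
\[
\Big|\int_{\mathbb R} v^3\,dx\Big|\le \|v\|_3^3 \le c\,\|v\|_2^{\,3-\frac{1}{\alpha+1}}\big\|D^{\frac{\alpha+1}{2}}_x v\big\|_2^{\frac{1}{\alpha+1}}.
\]
Because $\frac{1}{\alpha+1}<2$ for $\alpha\in(0,1)$, Young's inequality absorbs the factor $\big\|D^{\frac{\alpha+1}{2}}_x v\big\|_2^{\frac{1}{\alpha+1}}$ into the left-hand side at the cost of a constant depending only on $\|v\|_2=\|v_0\|_2$ (conservation of $M$) and on $E(v_0)$. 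This closes the bound on $\big\|D^{\frac{\alpha+1}{2}}_x v(\cdot,t)\big\|_2$ uniformly in $t$, and together with $\|v(\cdot,t)\|_2=\|v_0\|_2$ it delivers the required control of $\|v(\cdot,t)\|_{\frac{\alpha+1}{2},2}$. The main obstacle is precisely this last step: one must exploit that the exponent $\frac{1}{\alpha+1}<2$ is subcritical, so the cubic nonlinearity in $E$ can be absorbed and the energy does not force growth of the $H^{\frac{\alpha+1}{2}}$-norm. The weight $\phi_\alpha'$ and the motion $\rho(t)$ play no role beyond being passively transported through the argument.
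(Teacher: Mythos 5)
Your proof is correct for the range $\alpha\in(0,1)$ considered here, but it is genuinely different from the paper's. The paper follows the Kenig--Martel localization argument: it covers $\mathbb R$ by unit intervals with cutoffs $\chi_n$, applies the Gagliardo--Nirenberg inequality \eqref{e1.1} to each piece $v\chi_n$, controls $D^{\frac{\alpha+1}{2}}_x(v\chi_n)$ via the fractional Leibniz estimate \eqref{e2.1}, and then resums using two facts: the weight $\phi_\alpha'$ is comparable to a constant on each unit interval, and the exponent $\tfrac{3\alpha+2}{\alpha+1}>2$ lets one dominate $\big(\int(v\chi_n)^2\big)^{\frac{3\alpha+2}{2(\alpha+1)}}$ by $\int(v\chi_n)^2$ times a power of the global $L^2$ norm. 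That machinery is designed to survive the endpoint $\alpha=0$, where $H^{1/2}(\mathbb R)\not\hookrightarrow L^\infty(\mathbb R)$ and your first reduction would fail; this is why the original BO argument in \cite{km} and \cite{MP1} cannot simply pull out $\|v\|_\infty$. Your route exploits that for $\alpha>0$ one has $\tfrac{\alpha+1}{2}>\tfrac12$, so $|v|^3\phi_\alpha'\le\|v\|_\infty\,v^2\phi_\alpha'$ plus Sobolev embedding trivializes the weighted step; you then must supply the uniform-in-time $H^{\frac{\alpha+1}{2}}$ bound, which you correctly derive from conservation of $M$ and $E$ together with \eqref{e1.1} at $p=3$ and Young's inequality (the exponent $\tfrac{1}{\alpha+1}<2$ is indeed subcritical). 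Notably, the paper simply quotes that same uniform bound as ``the local well-posedness theory,'' whereas you actually prove it; on the other hand, your argument is strictly less robust in $\alpha$, collapsing at $\alpha=0$ where the paper's localization still works.
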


\begin{proof} Let $\chi:\mathbb{R}\longrightarrow\mathbb{R}$ be a $C^{\infty}$ function such that $\chi\equiv 1$ on $[0,1]$, $\chi\equiv 0$ in $(-\infty,-1]\cup[2,\infty)$, and $\chi\le 1$ on
	$\mathbb{R}$.
	
	Define $\chi_n(x)=\chi(x-n)$.
	
	Employing the Gagliardo-Nirenberg inequality \eqref{e1.1} we have
	\begin{equation}\label{e4.1}
	\begin{split}
	\int &| v(x)|^3 \phi_{\alpha}'(x-y_0)\,dx  \\
	&\le \sum_{n\in\mathbb{Z}} \int_n^{n+1} |v|^3\phi_{\alpha}'(x-y_0)\,dx\\
	&\le \sum_{n\in\mathbb{Z}} \Big(\int_{\mathbb{R}} |v|^3\chi_n^3\Big)\;\Big( \underset{[n-y_0,n+1-y_0]}{\sup} \phi_{\alpha}'\Big)\\
	&\le \sum_{n\in\mathbb{Z}} \Big(\int_{\mathbb{R}} \left|D^{\frac{\alpha+1}{2}}_{x}\left(v\chi_n\right)\right|^2\Big)^{\frac{1}{2(\alpha+1)}} \Big(\int_{\mathbb{R}} \left(v\chi_n\right)^2\Big)^{\frac{3\alpha+2}{2(\alpha+1)}} \Big(
	\underset{[n-y_0,n+1-y_0]}{\sup} \phi_{\alpha}'\Big).
	\end{split}
	\end{equation}
	On the other hand, the estimate \eqref{e2}, Sobolev embedding and Sobolev spaces properties yield
	\begin{equation}\label{e5.1}
	\begin{split}
	\left\| D^{\frac{\alpha+1}{2}}_x (v\chi_n)\right\|_{2}^{\frac{1}{\alpha+1}} &\le \left\|D^{\frac{\alpha+1}{2}}_x(v\chi_n)-vD^{\frac{\alpha+1}{2}}_x \chi_n\right\|_{2}^{\frac{1}{\alpha+1}}
	+\left\|vD^{\frac{\alpha+1}{2}}_x \chi_n\right\|_{2}^{\frac{1}{\alpha+1}}\\
	&\le  c\|v\|_{4}^{\frac{1}{\alpha+1}}\left\|D^{\frac{\alpha+1}{2}}_x \chi_n\right\|_{4}^{\frac{1}{\alpha+1}}\\
	&\lesssim \|v\|_{\frac{1}{2},2}^{\frac{1}{\alpha+1}}\\
	&\lesssim \|v\|_{\frac{\alpha+1}{2},2}^{\frac{1}{\alpha+1}}.
	\end{split}
	\end{equation}
	By the local well-posedness theory $\|v\|_{L^{\infty}_T H^{\frac{\alpha+1}{2}}}\le c\,\|v_0\|_{H^{\frac{\alpha+1}{2}}}$. Thus coming back to estimate \eqref{e4.1} we
	have
	\begin{equation}\label{e6.1}
	\begin{split}
	\int_{\mathbb{R}} |v|^3 &\phi_{\alpha}'(x-y_0)\,\mathrm{d}x \le c(\|v_0\|_{\frac{\alpha+1}{2},2}) \sum_{n\in\mathbb{Z}}\Big(\int_{\mathbb{R}} \left(v\chi_n\right)^2\Big)^{\frac{3\alpha+2}{2(\alpha+1)}} 
	\left(\underset{[n-y_0,n+1-y_0]}{\sup} \phi_{\alpha}'\right)\\
	&\le c(\|v_0\|_{\frac{\alpha+1}{2},2}) \sum_{n\in \mathbb{Z}}\Big(\int_{\mathbb{R}} (v\chi_n)^2\Big) \left(\underset{[n-y_0,n+1-y_0]}{\sup} \phi_{\alpha}'\right)\\
	&\le  c(\|v_0\|_{\frac{\alpha+1}{2},2}) \int_{\mathbb{R}} v^2 \phi_{\alpha}'(x-y_0).
	\end{split}
	\end{equation}
	where we use that $\frac{3\alpha+2}{\alpha+1}=2+\frac{\alpha}{\alpha+1}>2,$ for any $\alpha>0$ and 
	\begin{equation}\label{e7.1}
	\underset{[y,y+4]}{\sup} \phi'_{\alpha} \le c\underset{[y, y+4]}{\inf} \phi'_{\alpha}.
	\end{equation}
	
	This shows \eqref{e3.1}.

\end{proof}

\section{Proof of Theorem \ref{main} }
\begin{flushleft}
	\fbox{{\sc Assumption:}}
\end{flushleft}
We will assume that  there exist $a>0$ and $c_{0}>0$ such that  for all $T>0$
\begin{equation}\label{a1}
\sup_{t\in[0,T]}\int_{\mathbb{R}}|u(x,t)|\,\mathrm{d}x\leq c_{0}\langle T\rangle^{a},
\end{equation}
throughout our analysis we will impose some restrictions on $a.$

We also define the functions
\begin{equation*}
\lambda(t)=\frac{ct^{b}}{\log t}\quad \mbox{for any $c>0,$  },
\end{equation*}
 with $a+b=1.$
\begin{flushleft}
	\fbox{{\sc Step 1:}}
\end{flushleft}

We first multiply the equation in \eqref{dgbo} by 
\begin{equation*}
\frac{1}{t^{a}\log^{2}t}\phi_{\alpha}\left(\frac{x}{\lambda(t)}\right)
\end{equation*}
to obtain after integration the following identity:
\begin{equation}\label{e5}
\begin{split}
&\frac{\mathrm{d}}{\mathrm{d}t}\int_{\mathbb{R}}\frac{1}{t^{a}\log^{2} t}\,\phi_{\alpha}\left(\frac{x}{\lambda(t)}\right)u(x,t)\,\mathrm{d}x\\
&\underbrace{-\int_{\mathbb{R}}\left(\frac{1}{t^{a}\log^{2}t}\right)'\phi_{\alpha}\left(\frac{x}{\lambda(t)}\right)u(x,t)\,\mathrm{d}x}_{A_{1}(t)}\\
&\underbrace{+\frac{\lambda'(t)}{\lambda(t)t^{a}\log^{2}t}\int_{\mathbb{R}}\left(\frac{x}{\lambda(t)}\right)\phi_{\alpha}'\left(\frac{x}{\lambda(t)}\right)u(x,t)\mathrm{d}x}_{A_{2}(t)}\\
&\underbrace{-\frac{1}{t^{a}\log^{2}t}\int_{\mathbb{R}}\phi_{\alpha}\left(\frac{x}{\lambda(t)}\right)D_{x}^{\alpha+1}\partial_{x}u(x,t)\mathrm{d}x}_{A_{3}(t)}\\
&\underbrace{-\frac{1}{2\lambda(t)t^{a}\log^{2}t}\int_{\mathbb{R}}\phi_{\alpha}'\left(\frac{x}{\lambda(t)}\right)u^{2}(x,t)\,\mathrm{d}x}_{A_{4}(t)}=0.
\end{split}
\end{equation}
First we handle $A_{1}.$  Since 
\begin{equation}\label{e7}
\left(\frac{1}{t^{a}\log^{2}t}\right)'=-\left(\frac{a\log t +2}{t^{a+1}\log^{3}t}\right)\sim \frac{1}{t^{a+1}\log^{2}t}\quad\mbox{for} \quad t\gg 1,
\end{equation}
then  the term $A_{1}$ can be controlled by using  the assumption \eqref{a1} as follows
\begin{equation*}
\begin{split}
|A_{1}(t)|&\leq \left(\frac{2}{\alpha+3}\right)\left(\frac{1}{t^{a+1}\log^{2}t}\right)\int_{\mathbb{R}} |u(x,t)|\,\mathrm{d}x\\
&\lesssim \frac{1}{t^{}\log^{2}t}\in L^{1}\left(\left\{t\gg 1\right\}\right). 
\end{split}
\end{equation*}

To estimate  $A_{2}$ we notice that 
$x\phi_{\alpha}'(x)$ is a bounded function. So that,
\begin{equation*}
\frac{x}{\lambda(t)}\phi_{\alpha}'\left(\frac{x}{\lambda(t)}\right)\in L^{\infty}(\mathbb{R})\quad \mbox{uniformly for } t\gg1.
\end{equation*}
Additionally, the function $\lambda(t)$ satisfies 
\begin{equation*}
\frac{\lambda'(t)}{\lambda(t)}=\frac{\log t -1}{t\log t}\sim \frac{1}{t}\quad\mbox{for}\quad t\gg 1.
\end{equation*}
Therefore, 
\begin{equation*}
|A_{2}(t)|\lesssim\frac{1}{t^{a+1}\log^{2}t}\int_{\mathbb{R}}|u(x,t)|\,\mathrm{d}x\lesssim  \frac{1}{t\log^{2}t}\in L^{1}\left(\left\{t\gg 1\right\}\right).
\end{equation*}

In regards $A_{3}$ we have after apply integration by parts, Plancherel's identity and H\"{o}lder's inequality 
\begin{equation}\label{e4}
\begin{split}
A_{3}(t)&=\frac{1}{\lambda(t)t^{a}\log^{2}t}\int_{\mathbb{R}}\phi_{\alpha}'\Big(\frac{x}{\lambda(t)}\Big)D_{x}^{\alpha+1}u(x,t)\,\mathrm{d}x\\
&=\frac{1}{\lambda(t)t^{a}\log^{2}t}\int_{\mathbb{R}}D_{x}^{\alpha+1}\Big(\phi_{\alpha}'\Big(\frac{x}{\lambda(t)}\Big)\Big)u(x,t)\,\mathrm{d}x\\
&\leq\frac{1}{\lambda(t)t^{a}\log^{2}t} \left\|D_{x}^{\alpha+1}\Big(\phi_{\alpha}'\Big(\frac{x}{\lambda(t)}\Big)\Big)\right\|_{\infty}\|u(t)\|_{1}\\
&\leq\frac{1}{\lambda(t)t^{a}\log^{2}t}\left\|\widehat{D_{x}^{\alpha+1}\Big(\phi_{\alpha}'\Big(\frac{x}{\lambda(t)}\Big)\Big)}(\xi)\right\|_{1}\|u(t)\|_{1}.
\end{split}
\end{equation}

To show that the term  $A_{3}$ in $L^{1}_{t}$ we need to know how is the behavior of the fractional derivative above. 
In this order, we estimate this term  as follows:
first notice that  by Minkowski's integral inequality
 \begin{equation*}
\begin{split}
&\left\|\widehat{D_{x}^{\alpha+1}\Big(\phi_{\alpha}'\Big(\frac{x}{\lambda(t)}\Big)\Big)}(\xi)\right\|_{1}\\
&\quad=\frac{1}{|\lambda(t)|^{\alpha+1}}\int_{\mathbb{R}}|\xi|^{\alpha+1}|\widehat{\phi_{\alpha}'}(\xi)|\,\mathrm{d}\xi\\
&\quad =\frac{\sqrt{\pi}}{\Gamma\left(\frac{\alpha+2}{2}\right)|\lambda(t)|^{\alpha+1}}\int_{\mathbb{R}}\left|\int_{0}^{\infty}e^{-s}s^{\frac{\alpha-1}{2}}|\xi|^{\alpha+1}e^{-\frac{\pi^{2}\xi^{2}}{s}}\,\mathrm{d}s\right|\mathrm{d}\xi\\
&\quad \leq\frac{\sqrt{\pi}}{\Gamma\left(\frac{\alpha+2}{2}\right)|\lambda(t)|^{\alpha+1}}\int_{0}^{\infty}\int_{\mathbb{R}}e^{-s}s^{\frac{\alpha-1}{2}}|\xi|^{\alpha+1}e^{-\frac{\pi^{2}\xi^{2}}{s}}\,\mathrm{d}\xi\mathrm{d}s\\
&\quad =\frac{2\sqrt{\pi}}{\Gamma\left(\frac{\alpha+2}{2}\right)|\lambda(t)|^{\alpha+1}}\int_{0}^{\infty}e^{-s}s^{\frac{\alpha-1}{2}}\left(\int_{0}^{\infty}\xi^{\alpha+1}e^{-\frac{\pi^{2}\xi^{2}}{s}}\,\mathrm{d}\xi\right)\mathrm{d}s\\
&\quad =\frac{\Gamma\left(\frac{2\alpha+3}{2}\right)}{\pi^{\frac{2\alpha+3}{2}}|\lambda(t)|^{\alpha+1}}.
\end{split}
\end{equation*}

After inserting the estimates above in   \eqref{e4} we obtain  for $t\gg 1,$ that 
\begin{equation*}
\begin{split}
|A_{3}(t)|&\leq \frac{\Gamma\left(\frac{2\alpha+3}{2}\right)}{\pi^{\frac{2\alpha+3}{2}}\lambda(t)^{\alpha+2}t^{a}\log^{2}t}\|u(t)\|_{1}\\
&\lesssim_{a}\frac{\log^{\alpha} t}{t^{b(\alpha+2)}}\\
&\lesssim_{s} \frac{1}{t^{b(\alpha+2)-\alpha}} \hskip15pt \text{for} \hskip10pt  t\gg1.
\end{split}
\end{equation*}

From the last inequality we need to impose  the condition 
\begin{equation}\label{e8}
b(\alpha+2)-\alpha>1 \hskip10pt \text{or} \hskip10pt b>\frac{\alpha+1}{\alpha+2}.
\end{equation}
Since by hypothesis  $a+b=1$ (as in the case of the Benjamin-Ono equation see Mu\~{n}oz and Ponce \cite{MP1} ) we find that $a,b$ have to  satisfy the inequalities
\begin{equation}\label{e9}
a<\frac{1}{\alpha+2}\qquad \mbox{and} \qquad b>\frac{\alpha+1}{\alpha+2}.
\end{equation}
Under the conditions above we deduce that 
\begin{equation*}
|A_{3}(t)|\lesssim_{\alpha}\frac{1}{\lambda(t)^{\alpha+2}t^{a}\log^{2}t}\in L^{1}\left(\left\{t\gg 1\right\}\right).
\end{equation*}
Finally,  after integrating in time the identity \eqref{e5} combined with the estimates obtained above  we obtain that 
\begin{equation}\label{e10}
\begin{split}
&\int_{\{t\gg 1\}}\frac{1}{\lambda(t)t^{a}\log t}\left(\int_{\mathbb{R}}\phi_{\alpha}'\left(\frac{x}{\lambda(t)}\right)u^{2}(x,t)\,\mathrm{d}x\right)\,\mathrm{d}t\\
&\quad =\int_{\{t\gg 1\}}\frac{1}{t\log t}\left(\int_{\mathbb{R}}\phi_{\alpha}'\left(\frac{x}{\lambda(t)}\right)u^{2}(x,t)\,\mathrm{d}x\right)\,\mathrm{d}t\\
&\quad< \infty.
\end{split}
\end{equation}
\begin{flushleft}
	\fbox{{\sc Step 2:}}
\end{flushleft}
We multiply the equation in \eqref{dgbo} by 
\begin{equation}
\frac{1}{t^{a}\log^{2}t}\phi_{\alpha}\left(\frac{x}{\lambda(t)}\right)u(x,t)
\end{equation}
to obtain after integration the following identity:
\begin{equation}
\begin{split}
&\frac{\mathrm{d}}{\mathrm{d}t}\int_{\mathbb{R}}\frac{1}{t^{a}\log^{2} t}\,\phi_{\alpha}\left(\frac{x}{\lambda(t)}\right)u^{2}(x,t)\,\mathrm{d}x\\
&\underbrace{-\int_{\mathbb{R}}\left(\frac{1}{t^{a}\log^{2}t}\right)'\phi_{\alpha}\left(\frac{x}{\lambda(t)}\right)u^{2}(x,t)\,\mathrm{d}x}_{A_{1}(t)}\\
&\underbrace{+\frac{1}{t^{a}\log^{2}t}\int_{\mathbb{R}}\left(\frac{x}{\lambda(t)}\right)\left(\frac{\lambda'(t)}{\lambda(t)}\right)\phi_{\alpha}'\left(\frac{x}{\lambda(t)}\right)u^{2}(x,t)\mathrm{d}x}_{A_{2}(t)}\\
&\underbrace{-\frac{1}{t^{a}\log^{2}t}\int_{\mathbb{R}}\phi_{\alpha}\left(\frac{x}{\lambda(t)}\right)u(x,t)D_{x}^{\alpha+1}\partial_{x}u(x,t)\mathrm{d}x}_{A_{3}(t)}\\
&\underbrace{-\frac{1}{3\lambda(t)t^{a}\log^{2}t}\int_{\mathbb{R}}\phi_{\alpha}'\left(\frac{x}{\lambda(t)}\right)u^{3}(x,t)\,\mathrm{d}x}_{A_{4}(t)}=0.
\end{split}
\end{equation}
To estimate $A_{1}$ we use  that $\phi_{\alpha }$ is uniformly bounded (see \eqref{e6}),  that combined with  \eqref{e7} and the fact that the mass is a conserved quantity   yield
\begin{equation}
|A_{1}(t)|\lesssim_{\|u_{0}\|_{2}} \frac{1}{t^{a+1}\log^{2}t}\in L^{1}\left(\left\{t\gg 1 \right\}\right).
 \end{equation}
 In regards $A_{2}$, again  notice that 
 $x\phi_{\alpha}'(x)$ is a bounded function. So that,
 \begin{equation}
 \frac{x}{\lambda(t)}\phi_{\alpha}'\left(\frac{x}{\lambda(t)}\right)\in L^{\infty}(\mathbb{R})\quad \mbox{uniformly for } t\gg1, 
 \end{equation}
 and  the function $\lambda(t)$ satisfies 
 \begin{equation}
 \frac{\lambda'(t)}{\lambda(t)}=\frac{\log t -1}{t\log t}\sim \frac{1}{t}\quad\mbox{for}\quad t\gg 1.
 \end{equation}
 Therefore, 
 \begin{equation}
 \begin{split}
 |A_{2}(t)|&\lesssim_{\alpha}\frac{\lambda'(t)}{\lambda(t) \,t^{a}\log^{2}t}\int_{\mathbb{R}}u^{2}(x,t)\,\mathrm{d}x\\
 &\lesssim_{T,a,\alpha} \frac{1}{t^{a+1}\log^{2}t}\in L^{1}\left(\left\{t\gg 1\right\}\right).
 \end{split}
 \end{equation}
 Concerning $A_{3}$ we obtain after apply integration by parts 
and Plancherel's identity that 
 \begin{equation}
 A_{3}(t)=-\frac{1}{2t^{a}\log^{2}t}\int_{\mathbb{R}} u(x,t)\left[\mathcal{H}D_{x}^{\alpha+2}; \phi_{\alpha}\left(\frac{x}{\lambda(t)}\right)\right]u(x,t)\,\mathrm{d}x
 \end{equation}
which after  use the commutator decomposition  \eqref{e2}, it can be rewritten as 
\begin{equation}
\begin{split}
A_{3}(t)&=\frac{1}{2t^{a}\log^{2}t}\int_{\mathbb{R}}u(x,t)\left(R_{n}(\alpha+2)u\right)(x,t)\,\mathrm{d}x\\
&\quad +\frac{1}{4t^{a}\log^{2}t}\int_{\mathbb{R}}u(x,t)\left(P_{n}(\alpha+2)u\right)(x,t)\,\mathrm{d}x\\
&\quad -\frac{1}{4t^{a}\log^{2}t}\int_{\mathbb{R}}u(x,t)\left(
\mathcal{H}P_{n}(\alpha+2)\mathcal{H}u\right)(x,t)\,\mathrm{d}x\\
&=A_{3,1}(t)+A_{3,2}(t)+A_{3,3}(t).
\end{split}
\end{equation}
First we handle $A_{3,1}.$ We will  fix   $n$  satisfying the inequality
\begin{equation}
2n+1\leq \alpha+2\leq 2n+3;
\end{equation}
  from where we obtain $n=0.$ For this particular value of $n$ the remainder term $R_{0}(\alpha+2)$  maps $L^{2}(\mathbb{R})$ into $L^{2}(\mathbb{R}),$ more precisely
  \begin{equation}
  \left\|R_{0}(\alpha+2)f\right\|_{2}\leq \frac{c}{\sqrt{2\pi}}\|f\|_{2}\left\|\widehat{D_{x}^{\alpha+2}\phi_{\alpha}\left(\frac{x}{\lambda(t)}\right)}
  	(\xi)\right\|_{1}
  \end{equation}
  for $f$ in a suitable class.
  
  Therefore, for $t\gg 1$ we obtain by Lemma \ref{lem1}  that
  \begin{equation}
 \begin{split}
 |A_{3,1}|(t)&\lesssim\left|\frac{1}{2t^{a}\log^{2}t}\right|\|u(t)\|_{2}^{2}\left\|\widehat{D_{x}^{\alpha+2}\phi_{\alpha}\left(\frac{x}{\lambda(t)}\right)}\right\|_{1}\\
 &\lesssim \left|\frac{c_{\alpha+1}}{\lambda^{\alpha+2}(t)t^{a}\log^{2}t}\right|\|u_{0}\|_{2}^{2}\Gamma\left(\frac{2\alpha+3}{2}\right)\\
 &\lesssim_{\alpha}\left|\frac{1}{\lambda^{\alpha+2}(t)t^{a}\log^{2}t}\right|\|u_{0}\|_{2}^{2}\\
 &\lesssim_{\alpha,\|u_{0}\|_{2}} \frac{\log^{\alpha}t}{t^{b(\alpha+2)+a}}.
 \end{split}
 \end{equation}
From where we need to impose the  condition suggested in \eqref{e8}-\eqref{e9}  
to obtain  that $|A_{3,1}(t)|\in L^{1}\left(\left\{t\gg 1 \right\}\right).$
 
 Since we  fixed $n=0,$ we get after  replacing $P_{0}$ into $A_{3,2}$ and $A_{3,3}$ that 
 \begin{equation}
 A_{3,2}(t)=\left(\frac{\alpha+2}{4\lambda(t)t^{a}\log^{2}t}\right)
 \int_{\mathbb{R}}\left(
 D_{x}^{\frac{\alpha+1}{2}}u\right)^{2}(x,t)\phi_{\alpha}'\left(\frac{x}{\lambda(t)}\right)\,\mathrm{d}x
 \end{equation}
and 
\begin{equation}
 A_{3,3}(t)=\left(\frac{\alpha+2}{4\lambda(t)t^{a}\log^{2}t}\right)\int_{\mathbb{R}}\left(
\mathcal{H}D_{x}^{\frac{\alpha+1}{2}}u\right)^{2}(x,t)\phi_{\alpha}'\left(\frac{x}{\lambda(t)}\right)\,\mathrm{d}x.
\end{equation}

Next, we employ  Lemma \ref{lem2}  to handle the term $A_{4}$. More precisely it yields

\begin{equation}
\begin{split}
|A_{4}(t)|&=\frac{1}{3|\lambda(t)t^{a}\log^{2}t|}\int_{\mathbb{R}}\phi_{\alpha}'\left(\frac{x}{\lambda(t)}\right)|u(x,t)|^3\,\mathrm{d}x\\
&\lesssim_{\|u_{0}\|_{\frac{\alpha+1}{2},2}}\frac{1}{|\lambda(t)t^{a}\log^{2}t|}\int_{\mathbb{R}}\phi_{\alpha}'\left(\frac{x}{\lambda(t)}\right)u^{2}(x,t)\,\mathrm{d}x\\
&=c\left(\|u_{0}\|_{\frac{\alpha+1}{2},2}\right)\frac{1}{t\log t}\int_{\mathbb{R}}\phi_{\alpha}'\left(\frac{x}{\lambda(t)}\right)u^{2}(x,t)\,\mathrm{d}x
\end{split}
\end{equation}
which  in view of \eqref{e10} it  is bounded after integrating in time.

Therefore,
\begin{equation}
|A_{4}(t)|\in L^{1}\left(\left\{t\gg 1 \right\}\right).
\end{equation}

Collecting all the estimates corresponding to this step we conclude that
\begin{equation}\label{e11}
\begin{split}
&\int\limits_{\left\{t\gg 1\right\}}\frac{1}{\lambda(t)t^{a}\log^{2}t}\left\{\int_{\mathbb{R}} \Big(\big(D_{x}^{\frac{\alpha+1}{2}}u\big)^{2}+\big(\hil D_{x}^{\frac{\alpha+1}{2}}u\big)^{2}\Big)(x,t)\,\phi_{\alpha}'\Big(\frac{x}{\lambda(t)}\Big)\mathrm{d}x\right\}\mathrm{d}t\\
&\quad=\int\limits_{\left\{t\gg 1\right\}}\frac{1}{t\log t}\left\{\int_{\mathbb{R}} \Big(\big(D_{x}^{\frac{\alpha+1}{2}}u\big)^{2}+\big(\hil D_{x}^{\frac{\alpha+1}{2}}u\big)^{2}\Big)(x,t)\,\phi_{\alpha}'\Big(\frac{x}{\lambda(t)}\Big)\mathrm{d}x\right\}\mathrm{d}t\\
&\quad <\infty.
\end{split}
\end{equation}
Next, we gather the estimates in \eqref{e10} and \eqref{e11} to conclude that
\begin{equation}\label{e12}
\begin{split}
&\int\limits_{\left\{t\gg 1\right\}}\!\!\frac{1}{t\log t}\left\{\int_{\mathbb{R}} \Big(u^{2}\!+\!\big(D_{x}^{\frac{\alpha+1}{2}}u\big)^{2}\!+\!\big(\hil D_{x}^{\frac{\alpha+1}{2}}u\big)^{2}\Big)(x,t)\,\phi_{\alpha}'\Big(\frac{x}{\lambda(t)}\Big)\mathrm{d}x\right\}\mathrm{d}t\\
&\quad < \infty.
\end{split}
\end{equation}
Since the function $\frac{1}{t\log t}\notin L^{1}\left(\left\{t\gg 1\right\}\right)$ then the condition  \eqref{e10} implies that there exists $(t_{n})_{n}$ an increasing sequence  such that 
\begin{equation}\label{limite}
\int_{\mathbb{R}} \Big(u^{2}+\big(D_{x}^{\frac{\alpha+1}{2}}u\big)^{2}+\big(\hil D_{x}^{\frac{\alpha+1}{2}}u\big)^{2}\Big)(x,t_{n})\,\phi_{\alpha}'\Big(\frac{x}{\lambda(t_{n})}\Big)\mathrm{d}x\longrightarrow 0
\end{equation}
as $n\rightarrow \infty.$

Next, we indicate how to construct this sequence, but first we shall  remind that $\lambda(t)=\frac{t^{b}}{\log t}$, therefore  
\begin{equation}
\begin{split}
\frac{t_{n}^{b(\alpha+2)}}{\big(x^{2}+t_{n}^{2b}\big)^{\frac{\alpha+2}{2}}}\leq\phi_{\alpha}'\Big(\frac{x}{\lambda(t_{n})}\Big)\leq \frac{t_{n}^{b(\alpha+2)}}{\big(x^{2}+t_{n}^{2(b-1)}\big)^{\frac{\alpha+2}{2}}},\quad \mbox{for}\quad t_{n}\gg 1,
\end{split}
\end{equation}
then for $x>0$
\begin{equation}
\begin{split}
\sup_{x\in [2^{k},2^{k+1})}\phi_{\alpha}'\Big(\frac{x}{\lambda(t_{n})}\Big)&\leq \sup_{x\in[2^{k},2^{k+1}) }\frac{t_{n}^{b(\alpha+2)}}{\big(x^{2}+t_{n}^{2(b-1)}\big)^{\frac{\alpha+2}{2}}}\\
&\lesssim c\inf
_{x\in[2^{k},2^{k+1})}\frac{t_{n}^{b(\alpha+2)}}{\big(x^{2}+t_{n}^{2(b-1)}\big)^{\frac{\alpha+2}{2}}}\\
&\lesssim \frac{t_{n}^{b(\alpha+2)}}{\big(x^{2}+t_{n}^{2(b-1)}\big)^{\frac{\alpha+2}{2}}}, \quad \mbox{for}\quad x\in[2^{k},2^{k+1}).
\end{split}
\end{equation}
 Hence, for every $\epsilon>0,$ we consider  $|x|\sim t_{n}^{(b+\epsilon)(\alpha+2)}$, thus for every $k\in \mathbb{N}$ the inequality above can be controlled as follows:
 \begin{equation}
 \begin{split}
 \sup_{x\in[2^{k}, 2^{k+1})}\frac{t_{n}^{b(\alpha+2)}}{\big(x^{2}+t_{n}^{2(b-1)}\big)^{\frac{\alpha+2}{2}}}&\lesssim \frac{1}{t_{n}^{\epsilon(\alpha+2)}}
 \end{split}
 \end{equation}
 so that, if we choose $t_{n}=\log^{\frac{1}{\epsilon(\alpha+2)}}n,$ we get firstly that  $t_{n}$ is an increasing sequence as desired. In the case $x\leq 0$  an analogous argument applies.
   
  Aditionally,   we  introduce a dyadic  partition of unity   
 \begin{equation}
 1=\chi_{0}(x)+\sum_{k=0}^{\infty}\chi\big(2^{-k}x\big)\quad\mbox{where}\quad  \chi_{0}, \,\chi\in C^{\infty}_{0}(\mathbb{R}),
 \end{equation}
 and 
 \begin{equation}
 \mathrm{supp}\,\chi_{0}\subset \{|x|\leq 1\},\qquad  \mathrm{supp\, \chi}\subset\{1/2\leq |x|\leq 2\},
 \end{equation}
 then
 \begin{equation}
 \begin{split}
 &\int_{\mathbb{R}} \Big(u^{2}+\big(D_{x}^{\frac{\alpha+1}{2}}u\big)^{2}+\big(\hil D_{x}^{\frac{\alpha+1}{2}}u\big)^{2}\Big)(x,t_{n})\,\phi_{\alpha}'\Big(\frac{x}{\lambda(t_{n})}\Big)\mathrm{d}x\\
 &=\sum_{k=0}^{\infty} \int_{\mathbb{R}} \chi\big(2^{-k}x\big)\Big(u^{2}+\big(D_{x}^{\frac{\alpha+1}{2}}u\big)^{2}+\big(\hil D_{x}^{\frac{\alpha+1}{2}}u\big)^{2}\Big)(x,t_{n})\,\phi_{\alpha}'\Big(\frac{x}{\lambda(t_{n})}\Big)\mathrm{d}x\\
 &\lesssim \frac{1}{\log n}\sum_{k=0}^{\infty}\int_{\mathbb{R}}\chi\big(2^{-k}x\big)\Big(u^{2}+\big(D_{x}^{\frac{\alpha+1}{2}}u\big)^{2}+\big(\hil D_{x}^{\frac{\alpha+1}{2}}u\big)^{2}\Big)(x,t_{n})\,\mathrm{d}x\\
 &\lesssim\frac{\|u_{0}\|_{\frac{\alpha+1}{2},2}}{\log n}
 \end{split}
 \end{equation}
which proves assertion \eqref{limite}.

In particular, we have shown  that 
\begin{equation}
\lim_{n\uparrow \infty}\int_{|x|\leq \lambda(t_{n})}\Big(u^{2}+\big(D_{x}^{\frac{\alpha+1}{2}}u\big)^{2}+\big(\hil D_{x}^{\frac{\alpha+1}{2}}u\big)^{2}\Big)(x,t_{n})\,\mathrm{d}x= 0.
\end{equation}

\vspace{5mm}

\noindent{\bf Acknowledgements.}  The first and second authors were partially supported by CNPq and FAPERJ/ Brazil.

\medspace

\end{document}